\documentclass[onefignum,onetabnum]{siamart190516}


\usepackage{algorithmic}
\usepackage{amsfonts}
\usepackage{amsopn}
\usepackage{epstopdf}
\usepackage{graphicx}

\newsiamremark{remark}{Remark}
\newsiamremark{hypothesis}{Hypothesis}
\crefname{hypothesis}{Hypothesis}{Hypotheses}
\newsiamthm{claim}{Claim}

\ifpdf
  \DeclareGraphicsExtensions{.eps,.pdf,.png,.jpg}
\else
  \DeclareGraphicsExtensions{.eps}
\fi


\usepackage{amsmath}
\usepackage{amsopn}
\usepackage{amssymb}
\usepackage{array}
\usepackage{bm}
\usepackage{booktabs}
\usepackage{caption}
\usepackage{fix-cm}
\usepackage{hyperref}
\usepackage{lineno}
\usepackage[cal=boondoxo]{mathalfa}
\usepackage{mathtools}
\usepackage{microtype}
\usepackage{multirow}
\usepackage{pgf}
\usepackage{ragged2e}
\usepackage[onehalfspacing]{setspace}
\usepackage{subcaption}
\usepackage{tabularx}
\usepackage{tikz}
\usepackage{ulem}
\usepackage{url}
\usepackage{verbatim}

\usetikzlibrary{calc}
\usetikzlibrary{plotmarks}


\newcommand{\fat}[1]{\ifmmode\bm{#1}\else\textbf{#1}\fi}

\newcommand{\set}[1]{\mathbb{#1}}

\newcommand{\vect}[1]{\ensuremath{\fat{#1}}}

\newcommand{\matr}[1]{#1}

\newcommand{\tens}[1]{\mathcal{#1}}



\newcommand{\func}[1]{\textsf{#1}}





















\newcommand{\vx}[0]{\vect{x}}            

\newcommand{\mg}[0]{\matr{G}}            

\newcommand{\tg}[0]{\tens{G}}            
\newcommand{\ty}[0]{\tens{Y}}            

\newcommand{\ff}[0]{\func{f}}            
\newcommand{\hff}[0]{\widehat{\ff}}      

\let\div=\relax
\DeclareMathOperator{\div}{div}
\DeclareMathOperator{\vecop}{vec}


\newcommand{\method}[1]{\textsf{#1}}


\title{Black box approximation in the tensor train format initialized by ANOVA decomposition\thanks{
    Submitted to the editors August 04, 2022.
    \funding{The work was supported by the Ministry of Science and Higher Education of the Russian Federation under grant No. 075-10-2021-068.
}}}

\author{
    Andrei Chertkov\thanks{
        Skolkovo Institute of Science and Technology, Moscow, Russia (\email{a.chertkov@skoltech.ru}, \email{g.ryzhakov@skoltech.ru}, \email{i.oseledets@skoltech.ru}).
    }
    \and
    Gleb Ryzhakov\footnotemark[2]
    \and
    Ivan Oseledets\footnotemark[2]}

\headers{Black box approximation in the tensor train format}{A. Chertkov, G. Ryzhakov, and I. Oseledets}

\ifpdf
\hypersetup{
  pdftitle={Black box approximation in the tensor train format},
  pdfauthor={A. Chertkov, G. Ryzhakov, and I. Oseledets}
}
\fi

    

\begin{document}

\maketitle

\begin{abstract} 
    Surrogate models can reduce computational costs for multivariable functions with an unknown internal structure (black boxes).
    In a discrete formulation, surrogate modeling is equivalent to restoring a multidimensional array (tensor) from a small part of its elements.
    The alternating least squares (ALS) algorithm in the tensor train (TT) format is a widely used approach to effectively solve this problem in the case of non-adaptive tensor recovery from a given training set  (i.e., tensor completion problem).
    TT-ALS allows obtaining a low-parametric representation of the tensor, which is free from the curse of dimensionality and can be used for fast computation of the values at arbitrary tensor indices or efficient implementation of algebra operations with the black box (integration, etc.).
    However, to obtain high accuracy in the presence of restrictions on the size of the train data, a good choice of initial approximation is essential.
    In this work, we construct the ANOVA representation in the TT-format and use it as an initial approximation for the TT-ALS algorithm.
    The performed numerical computations for a number of multidimensional model problems, including the parametric partial differential equation, demonstrate a significant advantage of our approach for the commonly used random initial approximation.
    For all considered model problems we obtained an increase in accuracy by at least an order of magnitude with the same number of requests to the black box.
    The proposed approach is very general and can be applied in a wide class of real-world surrogate modeling and machine learning problems.
\end{abstract}

\begin{keywords}
    ALS, ANOVA, black box approximation, low rank representation, tensor train
\end{keywords}

\begin{AMS}
    65D15, 41A63, 35A17
\end{AMS}

\section{Introduction}
\label{s:intro}

Many physical and engineering models can be represented as a real function (output of the model), which depends on a multidimensional argument (input of the model) and looks like
\begin{equation}\label{eq:model}
y = \ff(\vx) \in \set{R},
\quad
\vx = [x_1,\, x_2,\, \ldots,\, x_d]^T \in \Omega \subset \set{R}^d.
\end{equation}
Such functions often have a form of a black box (BB), i.e., its internal structure and smoothness properties remain unknown.
The time and/or required resources for one computation of the function $\ff$ may be significant, and it is relevant to train some surrogate model $\func{g}$ (low-parametric approximation) that can be evaluated quickly, but at the same time remains sufficiently close to the original function.
Then such a simple model $\func{g}$ can be used instead of the original BB for faster computations of its outputs from given inputs.
Moreover, the statistical characteristics of the BB may be approximately recovered from this model $\func{g}$ by fast Monte-Carlo sampling or with the usage of the known specific internal structure of the function $\func{g}$.

The model $\func{g}$ may be a decomposition by some basis functions, for example, Chebyshev polynomials~\cite{trefethen2019approximation}, but in many cases, it is more natural to directly discretize the target function \cref{eq:model} on a multidimensional grid
\begin{equation}\label{eq:grid}
\bigl\{
    x_1^{(n_1)}\!, \,
    x_2^{(n_2)}\!, \,
    \ldots, \,
    x_d^{(n_d)}
\bigr\},
\quad
n_k = 1, 2, \ldots, N_k
\; \text{ for } \;
k = 1, 2, \ldots, d,
\end{equation}
and then represent the BB as implicitly specified multidimensional array (tensor)\footnote{
    By tensors we mean multidimensional arrays with a number of dimensions $d$ ($d \geq 1$).
    A two-dimensional tensor ($d = 2$) is a matrix, and when $d = 1$ it is a vector.
    For scalars we use normal font, we denote vectors with bold letters and we use upper case calligraphic letters ($\tens{A}, \tens{B}, \tens{C}, \ldots$) for tensors with $d > 2$.
    The $(n_1, n_2, \ldots, n_d)$th entry of a $d$-dimensional tensor $\tens{A} \in \set{R}^{N_1 \times N_2 \times \ldots \times N_d}$ is denoted by $\tens{A}[n_1, n_2, \ldots, n_d]$, where $n_k = 1, 2, \ldots, N_k$ ($k = 1, 2, \ldots, d$) and $N_k$ is a size of the $k$-th mode, and mode-$k$ slice of such tensor is denoted by $\tens{A}[n_1, \ldots, n_{k-1}, :, n_{k+1}, \ldots, n_d]$.
} $\ty \in \set{R}^{N_1 \times N_2 \times \ldots \times N_d}$ that collects all possible discrete values of the function \cref{eq:model} inside the domain $\Omega$, i.e.,
\begin{equation}\label{eq:tensor-function}
\ty[n_1,\, n_2,\, \ldots,\, n_d]
=
\ff\bigl(
    x_1^{(n_1)}\!, \, x_2^{(n_2)}\!, \, \ldots, \, x_d^{(n_d)}
    \bigr).
\end{equation}

Undoubtedly, the task of explicitly constructing and storing such a tensor is too computationally expensive, and for large values of the dimension $d$, this is completely impossible due to the course of the dimensionality.
However, the usage of the low-rank tensor approximations, namely the tensor train (TT) decomposition~\cite{oseledets2011tensor}, makes it possible to approximately represent the tensor in a compact low-parameter format using only a small number of explicitly computed elements.
The TT-decomposition is a common approach for compact approximation of multidimensional arrays and multivariable functions~\cite{cichocki2016tensor, cichocki2017tensor, phan2020tensor}.
The approximation in the TT-format allows subsequent usage both for quick calculation of BB values and for constructing its various statistical characteristics.
It is possible to effectively perform algebraic operations (element-by-element addition and multiplication, convolution, etc.)
over tensors in the TT-format~\cite{oseledets2011tensor}.
Thus, for example, 
it turns out to be more efficient in some cases 
to construct a surrogate model of a multidimensional tensor in the TT-format first, 
and then perform summation with it (see, e.g.,~\cite{ballester2022tensor, ryzhakov2022constructive}).

The TT-ALS (alternating least squares in the TT-format)~\cite{holtz2012alternating, grasedyck2013alternating} is a tensor completion method that constructs the TT-approximation from a given set of tensor elements (e.g., random samples may be used), alternately optimizing the current approximation for each mode, with fixed values of the parameters corresponding to the rest of the modes.
The TT-ALS is a powerful tool for tensor approximation, and it has found a wide range of practical applications~\cite{wang2017efficient, sedighin2021adaptive, chen2021tensor}. 
To use the TT-ALS, it is necessary to set an initial approximation, which can, for example, be given in the form of a random TT-tensor.
However, the choice of this initial approximation plays an important role in the quality of the resulting representation in the TT-format and often the use of a random TT-tensor turns out to be unsatisfactory~\cite{kapushev2020tensor, richter2021solving, dolgov2019hybrid, chertkov2021solution}.
So, in particular, with an unsuccessful initial approximation, too many iterations of the method may be required, or the resulting solution may converge to a local optimum that is not good enough.
In this work, we propose the modified version of the ANOVA (analysis of variance) representation~\cite{sobol2001global} in the TT-format as an initial approximation for the TT-ALS algorithm.
The interdependence of the model inputs utilizing a split representation corresponding to the ANOVA decomposition allows significantly increase the stability of the TT-ALS approach for constructing a TT-tensor from restricted observations.

As a practically significant numerical example, we consider a parameter-dependent partial differential equation (PDE).
Equations of this kind have a wide range of applications~\cite{cliffe2011multilevel, babuska2004galerkin} such as those associated with optimization or uncertainty quantification after random field discretization.
These include groundwater heights in geotechnical engineering, soil parameters, wind loads and snow loads in structural engineering, the amount of precipitation and evaporation in hydrology, etc.
We consider the mean value of the PDE solution as a multivariable function of PDE parameters and construct its compact representation in the TT-format using the proposed approach.
The performed numerical computations for this problem, as well as for a number of model analytical multivariable functions, demonstrate a significant advantage of our approach concerning the commonly used random initial approximation.

To summarize, our main contributions are the following:
\begin{itemize}
    \item we construct the ANOVA expansion in the form of the TT-tensor (TT-ANOVA) and we propose to use it as an initial approximation for the TT-ALS algorithm to approximate the multidimensional BB (\func{TT-ANOVA-ALS} method);
    \item we implement the proposed algorithms as a publicly available python package\footnote{
        We implemented basic operations in the TT-format, as well as TT-ALS and TT-ANOVA approaches within the framework of the software product \func{teneva}, which is available from \url{https://github.com/AndreiChertkov/teneva}.
    };
    \item we apply our approach\footnote{
        The program code with numerical examples, given in this work, is publicly available in the repository \url{https://github.com/AndreiChertkov/teneva_research_anova_and_als}.
        For all considered model problems, including parametric PDE, we obtained an increase in accuracy by at least an order of magnitude with the same number of requests to the BB.
    } for several multidimensional model problems, including the approximation of the parametric PDE solution, to demonstrate its robustness and performance.
\end{itemize}

\section{Tensor train format}
\label{s:tt}

There has been much interest lately in the development of data-sparse tensor formats for high-dimensional problems.
A very promising tensor format is provided by the TT-approach~\cite{oseledets2011tensor}.
It can be computed via standard decompositions (such as SVD and QR)~\cite{oseledets2009breaking}, but does not suffer from the curse of dimensionality.

A tensor $\ty \in \set{R}^{N_1 \times N_2 \times \ldots \times N_d}$ is said to be in the TT-format, if its elements are represented by the following formula (see also illustration on \cref{fig:tt-element})
\begin{multline}
\label{eq:tt-repr-tns}
\ty [n_1, n_2, \ldots, n_d]
=
\sum_{r_1=1}^{R_1}
\sum_{r_2=1}^{R_2}
\cdots
\sum_{r_{d-1}=1}^{R_{d-1}}
    \tg_1 [1, n_1, r_1]
    \tg_2 [r_1, n_2, r_2]
     \cdots 
    \\
    \tg_{d-1} [r_{d-2}, n_{d-1}, r_{d-1}]
    \tg_d [r_{d-1}, n_d, 1],
\end{multline}
where $n_k = 1, 2, \ldots, N_k$ ($k = 1, 2, \ldots, d$) represent the multi-index, three-dimensional tensors $\tg_k \in \set{R}^{R_{k-1} \times N_k \times R_k}$ are named TT-cores, and integers $R_{0}, R_{1}, \ldots, R_{d}$ (with convention $R_{0} = R_{d} = 1$) are named TT-ranks.
The latter formula can be also rewritten in a more compact form
\begin{equation}\label{eq:tt-repr-mtr}
\ty [n_1, n_2, \ldots, n_d]
=
\matr{G}_1(n_1)
\matr{G}_2(n_2)
\cdots
\matr{G}_d(n_d),
\end{equation}
where $\matr{G}_k(n_k) = \tg_k [:, n_k, :]$ is an $R_{k-1} \times R_k$ matrix for each fixed $n_k$ (since $R_{0} = R_{d} = 1$, the result of matrix multiplications in \cref{eq:tt-repr-mtr} is a scalar).

The benefit of the TT-decomposition is the following.
Storage of the TT-cores $\tg_1, \tg_2, \ldots, \tg_d$ requires less or equal than $d \times \max_{1 \leq k \leq d}{\left(N_k R_k^2\right)}$ memory cells instead of $N = N_1 N_2 \ldots N_d \sim N_0^d$ cells for the uncompressed tensor, where $N_0$ is an average size of the tensor modes, and hence the TT-decomposition is free from the curse of dimensionality if the TT-ranks are bounded.
The detailed description of the TT-format and linear algebra operations in terms of this format is given in works~\cite{oseledets2009breaking, oseledets2011tensor}.

\begin{figure}[t!]
    \centering
    \includegraphics[scale=0.48]{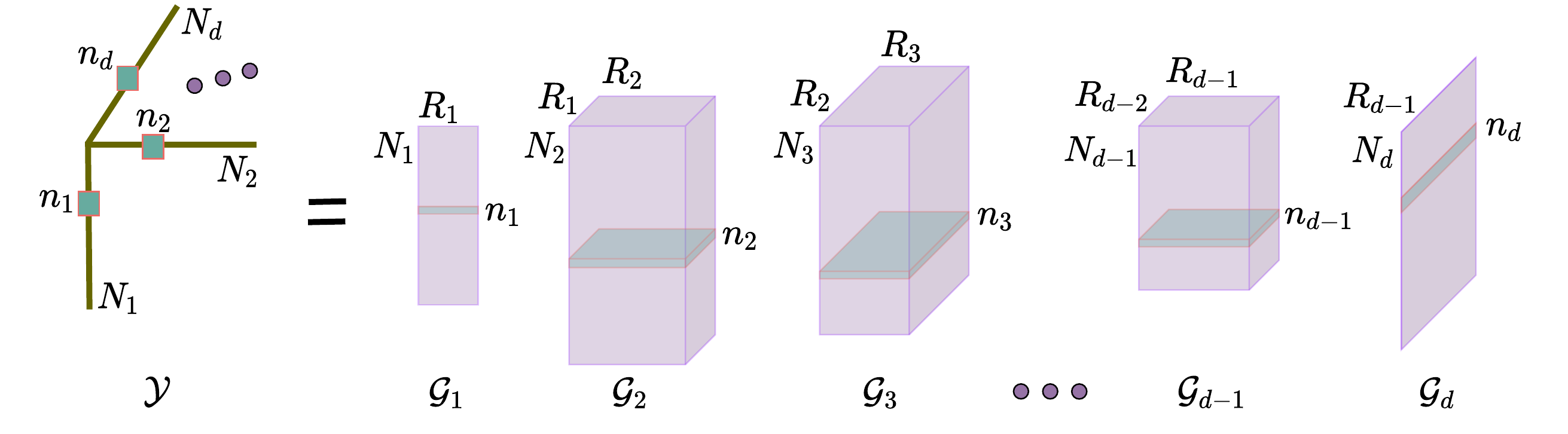}
    \caption{
        Schematic representation of the low-rank TT-decomposition.
        The tensor $\ty \in \set{R}^{N_1 \times N_2 \times \ldots \times N_d}$ is represented in the low-rank TT-format as an ordered list of three-dimensional tensors $\tg_k \in \set{R}^{R_{k-1} \times N_k \times R_k}$ ($k = 1, 2, \ldots, d$), which are named TT-cores.
        To compute an arbitrary element $(n_1, n_2, \ldots, n_d)$ of the tensor $\ty$, the product of the TT-cores according to the formula \cref{eq:tt-repr-tns} or \cref{eq:tt-repr-mtr} should be performed.
    }
    \label{fig:tt-element}
\end{figure}

An exact TT-representation exists for the given full tensor $\widehat{\ty}$, and TT-ranks of such representation are bounded by ranks of the corresponding unfolding matrices~\cite{oseledets2011tensor}.
Nevertheless, in practical applications, it is more useful to construct TT-approximation with a prescribed accuracy $\epsilon_{TT}$, and then carry out all operations (summations, products, etc) in the TT-format, maintaining the same accuracy $\epsilon_{TT}$ of the result.
For a given tensor $\widehat{\ty}$ in the full format and desired accuracy $\epsilon_{TT}$ in the Frobenius norm
\begin{equation}\label{eq:tt-appr-error}
|| \ty - \widehat{\ty} ||_F
\leq
\epsilon_{TT} \cdot || \widehat{\ty} ||_F,
\end{equation}
the TT-decomposition (compression) can be performed by a stable TT-SVD algorithm, but this procedure of the tensor approximation from the full format is too costly and is even impossible for large dimensions since it requires the calculation of all elements of the original tensor.
Therefore, more efficient algorithms, that allow constructing the TT-decomposition from a small part of the tensor elements (i.e., allow to perform tensor completion), are used in multidimensional applications.
We consider below the TT-ALS approach, which is one of the most popular such algorithms.

\section{Black box approximation}
\label{s:scheme}

\begin{figure}[t!]
    \centering
    \includegraphics[scale=0.62]{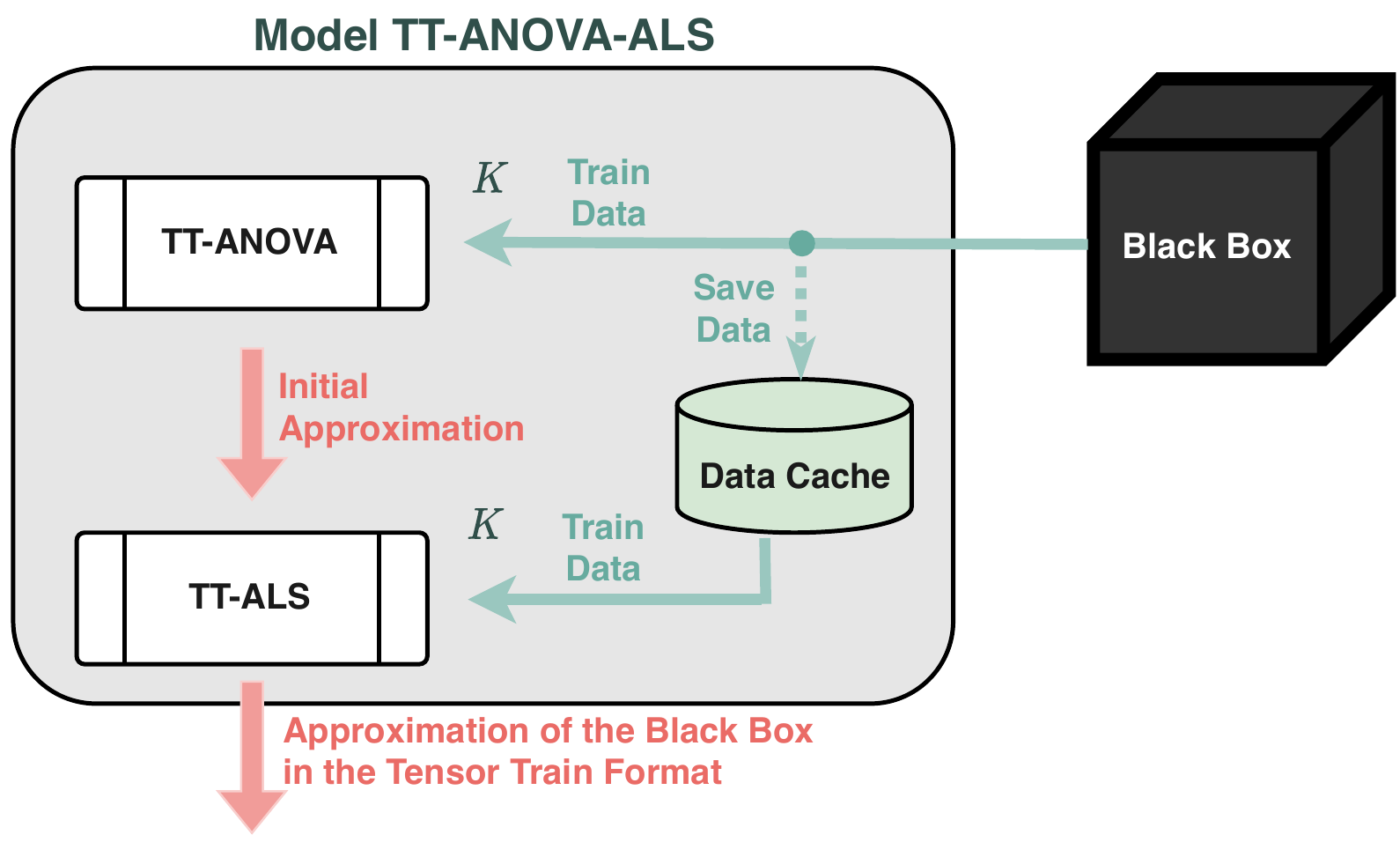}
    \caption{
        The proposed \func{TT-ANOVA-ALS} computation approach for the discretized BB approximation based on a random training dataset.
    }
    \label{fig:scheme}
\end{figure}

The proposed \func{TT-ANOVA-ALS} approach to improve the stability of the BB approximation in terms of the TT-format is schematically shown in \cref{fig:scheme}.
We build the rough approximation for the BB by \func{TT-ANOVA} algorithm (which is presented below in \cref{s:anova}), using $K$ random samples from the BB, and then apply the well-known \func{TT-ALS} algorithm (it is briefly discussed below in \cref{s:als}) on the same $K$ samples using the result of the \func{TT-ANOVA} as an initial approximation.
Note that in this model, there is no need to access the~BB during iterations of the algorithm, just a random training dataset is sufficient.
We show by numerical examples, that this approach significantly increases the final accuracy of the approximation with virtually no increase in the computational complexity and eliminates the possibility of an accidental failure of the \func{TT-ALS} when the random initial approximation is chosen poorly.

\subsection{ANOVA decomposition in the tensor train format}
\label{s:anova}

We assume that our model function \cref{eq:model} is defined in the hypercube $\Omega=[0,\,1]^d$  and integrable.
Consider its ANOVA representation (or decomposition into summands of different dimensions)~\cite{sobol2001global}
\begin{equation}\label{eq:anova}
\ff(x_1, \ldots, x_d)
=
\ff_0 +
\sum_{i=1}^d \ff_i(x_i) +
\sum_{1 \leq i < j \leq d} \ff_{ij}(x_i, \, x_j) +
\ldots +
\ff_{123 \ldots d}(x_1, \, \ldots, \, x_d),
\end{equation}
where for all the functions $\ff_{i_1 i_2 \ldots i_s}$ ($1 \leq i_1 <  i_2 < \ldots < i_s \leq d$, $s = 1, 2, \ldots,d$) the following equality holds
\begin{equation}\label{eq:anova-cond}
\int_{\Omega}
    \ff_{i_1 \ldots i_k \ldots i_s}
        (x_{i_1}, \, \ldots, \, x_{i_k}, \, \ldots, \, x_{i_s})
    \, dx_{i_k} = 0,
    \quad
    \text{ for } \;
     k = 1, \, 2, \, \ldots, \, s.
\end{equation}
It can be shown from the definition of ANOVA \cref{eq:anova,eq:anova-cond} that this decomposition is unique and all the members are orthogonal, hence the zero and first-order terms can be expressed as follows
\begin{equation}\label{eq:anova-members}
\ff_0
=
\int_{\Omega} \ff(\vx) \, d \vx,
\quad
\ff_i(x_i)
=
\int_{\Omega} \ff(\vx) \, \prod_{k \ne i} dx_k
-
\ff_0,
\quad
\text{ for } \;
i = 1, \, 2, \, \ldots, \, d.
\end{equation}

The ANOVA representation \cref{eq:anova} is often used to estimate the sensitivity of a function to input variables or their combinations (Sobol indices) and to understand the relationship of variables or their effect on the model output~\cite{sobol2001global, ballester2019sobol}.
This information may be useful to eliminate uncertainty or simplify the model, i.e., the variables on which the output depends weakly can be fixed (``frozen'').
However, in our case, the attractiveness of ANOVA lies in the possibility of its fast and efficient representation in the form of a TT-tensor using some samples from the objective function $\ff$.

Suppose we have a training dataset $(X^{(train)}, \, Y^{(train)})$
\begin{align*}
X^{(train)} & = \{
    \vx^{(1)}, \vx^{(2)}, \ldots, \,\vx^{(M)}
\}, &
\vx^{(m)} &\in \set{R}^{d},
\\
Y^{(train)} & = \{
    y^{(1)}, y^{(2)}, \ldots, \,y^{(M)}
\}, &
y^{(m)} &= \ff(\vx^{(m)}) \in \set{R},
\end{align*}
obtained from a uniform distribution, or from any low discrepancy sequence (LHS, Sobol, etc.).
Note that for the tensor approximation problem each variable $x_i$ ($i = 1, 2, \ldots, d$) take only a discrete set of values $v^{(i)}=\{v_1^{(i)},\,v_2^{(i)},\,\ldots, v_{N_i}^{(i)}\}$, moreover, the number of these values $N_i$ for different variables may be different\footnote{
    For simplicity, we will further assume that the number of different values present in the dataset is the same as the size of the corresponding tensor modes $N_1, N_2, \ldots, N_d$.
    This can be achieved, for example, by using the LHS distribution.
}.
Then, we introduce the notation
\begin{equation*}
X_{i,j}^{(train)} = \left\{
    \vx \in X^{(train)} \mid x_i = v_j^{(i)}
\right\},
\quad
\text{ for }\;
j = 1,\, 2,\, \ldots,\, N_i, \;\;
i = 1,\, 2,\, \ldots,\, d.
\end{equation*}
In other words, the set $X_{i,j}^{(train)}$ collects those and only those vectors from $X^{(train)}$, for which the $i$-th component coincides with its $j$-th possible value.

Taking into account the introduced notation, we can represent the zero and first-order terms from \cref{eq:anova-members} as
\begin{equation}\label{eq:anova_res_0}
\hff_0 \approx
    \frac1M \sum_{m=1}^M \ff(\vx^{(m)}) =
    \mathrm{mean} \{
        \ff(\vx) \mid \vx \in X^{(train)}
    \},
\end{equation}
\begin{equation}\label{eq:anova_res_1}
\hff_i(v_j^{(i)}) \approx
    \frac1{|X_{i,j}^{(train)}|}
    \sum_{\vx \in X_{i,j}^{(train)}}
        \ff(\vx) - \hff_0 =
    \mathrm{mean} \{\;
        \ff(\vx) \mid \vx \in X_{i,j}^{(train)}
    \;\}
    - \widehat f_0,
\end{equation}
where $j = 1, 2, \ldots, N_i$, $i = 1, 2, \ldots, d$ and $|\cdot|$ denotes the cardinality of a set.

The resulting approximations \cref{eq:anova_res_0} and \cref{eq:anova_res_1} allow to construct the discretized first order ANOVA representation for a given training dataset.
In the following Theorem, we consider a way to explicitly convert this representation to the TT-format.

\begin{theorem}
Consider the TT-decomposition $\ty$ for tensor $\hat{\ty} \in \set{R}^{N_1 \times N_2 \times \ldots \times N_d}$ with the TT-cores $\tens{G}_1 \in \set{R}^{1 \times N_1 \times 2}$, $\tens{G}_i \in \set{R}^{2 \times N_i \times 2}$ (for $i = 2, 3, \ldots, d-1$) and $\tens{G}_d \in \set{R}^{2 \times N_d \times 1}$ such that
\begin{equation}\label{eq:anova-tt-core-first}
\tens{G}_1 [1, j, :] = \begin{pmatrix}
    1 & \hff_1(v_j^{(1)})
\end{pmatrix},
\quad
j = 1, 2, \ldots, N_1,
\end{equation}
\begin{equation}\label{eq:anova-tt-core-middle}
\tens{G}_i [:, j, :] = \begin{pmatrix}
    1 & \hff_i(v_j^{(i)}) \\
    0 & 1
\end{pmatrix},
\quad
j = 1, 2, \ldots, N_i,
\quad
i = 2, 3, \ldots, d-1,
\end{equation}
\begin{equation}\label{eq:anova-tt-core-last}
\tens{G}_d [:, j, 1] = \begin{pmatrix}
    \hff_d(v_j^{(d)}) + \hff_0 \\
    1
\end{pmatrix},
\quad
j = 1, 2, \ldots, N_d,
\end{equation}
where $\hff_i$ (i = 0, 1, \ldots, d) are the zero and first order terms from \cref{eq:anova_res_0} and \cref{eq:anova_res_1}.
Then the TT-tensor $\ty$ has the same values for all possible elements as the corresponding first-order ANOVA decomposition.
\end{theorem}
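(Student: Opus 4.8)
The plan is to directly compute the product of the TT-cores given in \cref{eq:anova-tt-core-first,eq:anova-tt-core-middle,eq:anova-tt-core-last} using the compact representation \cref{eq:tt-repr-mtr} and verify that the result equals the discretized first-order ANOVA expansion, i.e., $\hff_0 + \sum_{i=1}^d \hff_i(v_{n_i}^{(i)})$. The key observation is that the middle cores are upper-triangular $2\times 2$ matrices of the special form $\begin{pmatrix} 1 & a_i \\ 0 & 1 \end{pmatrix}$, and such matrices satisfy the multiplicative identity $\prod_{i} \begin{pmatrix} 1 & a_i \\ 0 & 1 \end{pmatrix} = \begin{pmatrix} 1 & \sum_i a_i \\ 0 & 1 \end{pmatrix}$, which is what turns a product of cores into a sum of first-order terms.

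First I would fix an arbitrary multi-index $(n_1, n_2, \ldots, n_d)$ and write out $\matr{G}_1(n_1) \matr{G}_2(n_2) \cdots \matr{G}_d(n_d)$ explicitly. I would proceed by induction (or a direct telescoping argument) on the partial products: multiplying the row vector $\begin{pmatrix} 1 & \hff_1(v_{n_1}^{(1)}) \end{pmatrix}$ by the upper-triangular matrices $\matr{G}_2(n_2), \ldots, \matr{G}_{d-1}(n_{d-1})$ in turn, I would show by induction that after incorporating core $k$ (for $2 \le k \le d-1$) the partial product equals the row vector $\begin{pmatrix} 1 & \sum_{i=1}^{k} \hff_i(v_{n_i}^{(i)}) \end{pmatrix}$. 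The inductive step is a single $1\times 2$ by $2\times 2$ matrix multiplication: $\begin{pmatrix} 1 & s \end{pmatrix}\begin{pmatrix} 1 & \hff_k(v_{n_k}^{(k)}) \\ 0 & 1 \end{pmatrix} = \begin{pmatrix} 1 & s + \hff_k(v_{n_k}^{(k)}) \end{pmatrix}$.

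Then I would multiply this accumulated row vector $\begin{pmatrix} 1 & \sum_{i=1}^{d-1} \hff_i(v_{n_i}^{(i)}) \end{pmatrix}$ by the last core $\matr{G}_d(n_d) = \begin{pmatrix} \hff_d(v_{n_d}^{(d)}) + \hff_0 \\ 1 \end{pmatrix}$, obtaining the scalar $\hff_d(v_{n_d}^{(d)}) + \hff_0 + \sum_{i=1}^{d-1} \hff_i(v_{n_i}^{(i)}) = \hff_0 + \sum_{i=1}^{d} \hff_i(v_{n_i}^{(i)})$. This is exactly the value predicted at the grid node $(v_{n_1}^{(1)}, \ldots, v_{n_d}^{(d)})$ by the first-order ANOVA decomposition with terms \cref{eq:anova_res_0,eq:anova_res_1}, so $\ty[n_1, \ldots, n_d]$ agrees with the first-order ANOVA representation for every multi-index, which completes the proof.

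This argument is essentially elementary linear algebra and I do not anticipate a genuine obstacle; the only mild subtlety is bookkeeping at the two endpoints (the first core carries no constant offset of its own and the last core absorbs $\hff_0$), so one must be careful that $\hff_0$ and each $\hff_i$ appear exactly once. It is also worth remarking explicitly that the statement is about equality of tensor \emph{values} and not about minimality of the TT-ranks, so no rank argument is needed; and one should note that the construction is not symmetric in the mode ordering — any single core could have been chosen to carry the $\hff_0$ offset — which does not affect the value but is worth a sentence for clarity.
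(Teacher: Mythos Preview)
Your proposal is correct and follows essentially the same approach as the paper's own proof: a direct inductive computation of the partial products $\matr{G}_1(n_1)\cdots\matr{G}_k(n_k) = \begin{pmatrix} 1 & \sum_{i=1}^{k}\hff_i(v_{n_i}^{(i)}) \end{pmatrix}$ for $k\le d-1$, followed by the final multiplication with $\matr{G}_d(n_d)$ to pick up $\hff_0$. Your additional remarks about the upper-triangular identity, the irrelevance of rank minimality, and the freedom in where to place $\hff_0$ are nice clarifications but go slightly beyond what the paper includes.
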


\begin{proof}
Let directly calculate the matrix product of the TT-cores \cref{eq:anova-tt-core-first}, \cref{eq:anova-tt-core-middle}, \cref{eq:anova-tt-core-last}
\begin{equation*}
\tg_1[1, j_1, :]
\;
\tg_2[:, j_2, :]
=
\begin{pmatrix}
    1 & \hff_1(v_{j_1}^{(1)})
\end{pmatrix}
\begin{pmatrix}
    1 & \hff_2(v_{j_2}^{(2)}) \\
    0 & 1
\end{pmatrix}
=
\begin{pmatrix}
    1 &
        \hff_1(v_{j_1}^{(1)}) +
        \hff_2(v_{j_2}^{(2)})
\end{pmatrix},
\end{equation*}
then
\begin{equation*}
\begin{split}
\tg_1 [1, j_1, :]
\;
\tg_2 [:, j_2, :]
\;
\tg_3 [:, j_3, :]
& =
\begin{pmatrix}
    1 & \hff_1(v_{j_1}^{(1)}) + \hff_2(v_{j_2}^{(2)})
\end{pmatrix}
\begin{pmatrix}
1 & \hff_3(v_{j_3}^{(3)}) \\
0 & 1
\end{pmatrix}
\\
& =
\begin{pmatrix}
    1 &
        \hff_1(v_{j_1}^{(1)}) +
        \hff_2(v_{j_2}^{(2)}) +
        \hff_3(v_{j_3}^{(3)})
\end{pmatrix},
\end{split}
\end{equation*} 
and so on by induction, up to the last TT-core
\begin{equation*}
\begin{split}
&
\tg_1 [1, j_1, :]
\;
\tg_2 [:, j_2, :]
\;
\ldots
\;
\tg_{d-1} [:, j_{d-1}, :]
\;
\tg_{d} [:, j_d, 1]
=
\\
&
\begin{pmatrix}
    1 &
        \hff_1(v_{j_1}^{(1)}) +
        \hff_2(v_{j_2}^{(2)}) +
        \ldots +
        \hff_{d-1}(v_{j_{d-1}}^{(d-1)})
\end{pmatrix}
\begin{pmatrix}
    \hff_d(v_{j_d}^{(d)}) + \hff_0 \\
    1
\end{pmatrix} =
\\
&
\hff_0 +
\hff_1(v_{j_1}^{(1)}) +
\hff_2(v_{j_2}^{(2)}) +
\ldots +
\hff_{d}(v_{j_{d}}^{(d)}),
\end{split}
\end{equation*}
that is, the $(j_1, j_2, \ldots, j_d)$th element of the TT-tensor (taking into account the formula \cref{eq:tt-repr-tns}) coincides with the ANOVA representation \cref{eq:anova}.
\end{proof}

Thus, the given scheme for choosing the elements of the TT-cores allows us to obtain the ANOVA representation within the framework of the low-rank TT-decomposition, and we call this representation \func{TT-ANOVA}.
Note that this result can be directly generalized to the case of arbitrary TT-rank if we take the corresponding additional elements of the TT-cores equal to zero.

\subsection{ALS algorithm for tensor completion}
\label{s:als}

The idea of the TT-ALS tensor completion algorithm~\cite{holtz2012alternating, grasedyck2013alternating} is to successively refine each of the TT-cores by solving an overdetermined system of linear equations with the right-hand side equal to the given tensor values from the train dataset $(X^{(train)}, \, Y^{(train)})$ using the least-squares method.
The TT-ALS starts from some initial approximation, which is usually a random TT-tensor, and successively refines the TT-cores in passes from left to right (i.e., from the first TT-core to the last one) and back.
One complete pass from left to right and back is called a sweep, and the total number of sweeps is a hyperparameter of the algorithm.

\begin{algorithm}[t]
\caption{TT-ALS algorithm}
\label{alg:als}
\begin{algorithmic}[1]
\REQUIRE{
    input multi-indices~$J$ and related outputs~$Y$ of the BB;
    initial approximation $\tg_1,\,\tg_2,\,\ldots,\,\tg_d$ ($\tg_i \in \set{R}^{R_{i-1} \times N_i \times R_i}$ for $i=1,\ldots,d$})
\ENSURE{
    TT-cores of the tensor that approximate the given BB}

\WHILE{stooping criterion is not met}

\FOR{$i$ from $1$ to $d$}
\FOR{$n_k$ from $1$ to $N_k$}
\STATE{$l\gets 1$}
\STATE{Allocate 
$\matr{A} \in \set{R}^{K\times (R_{i-1} R_{i})}$,
$\vect{b} \in \set{R}^{K}$,
where $K$ is the number of samples $J$ for which the 
$i$-th component equal to $n_k$
}
\FOR{$\vect{j}$ from $J$ such that its $i$-th component equal to $n_k$}
\STATE{$\matr{A} [l,:] \gets \vect{a}$, where vector $\vect{a}$ is defined in \cref{eq:als_compact} and is
based on the current TT-cores and the components of the vector $\vect{j}$}
\STATE{$\vect{b} [l] \gets $ the value from $Y$ corresponds to input $\vect{j}$}
\STATE{$l \gets l + 1$}
\ENDFOR
\STATE{Solve $\matr{A} \, \vect{g} = \vect{b}$ for $\vect{g}$ using least squares method}
\STATE{Update $\tg_i[:,n_k,:] \gets \text{reshape}(\vect{g},\, (R_{i-1}, \, R_i))$}
\ENDFOR
\ENDFOR
\ENDWHILE
\RETURN $\tg_1,\,\tg_2,\,\ldots,\,\tg_d$
\end{algorithmic}
\end{algorithm}

Suppose at the current step we need to update the values of the $i$-th TT-core.
Consider a point $\vx$ from the train dataset in which the value is known and is equal to $y = \ff(\vx)$.
In the discrete setting, this point corresponds to some multi-index $\vect{j} = \left( j_1, \, j_2, \, \ldots, \, j_d \right)$ of the tensor.
We denote by $\vect{g}_l$ and $\vect{g}_r$ the two vectors that are obtained after the multiplication of the TT-cores to the left and right of the $i$-th TT-core correspondingly and by $\mg$ the slice of the $i$-th TT-core:
\begin{align*}
\vect{g}_l^T = &
    \;
    \tg_1 [1, \, j_1, \, :]
    \;
    \tg_2 [:, \, j_2, \, :]
    \;
    \cdots
    \;
    \tg_{i-1} [:, \, j_{i-1}, \, :],
&
\vect{g}_l &
\in \set{R}^{R_{i-1} },
\\
\vect{g}_r = &
    \;
    \tg_{i+1} [:, \, j_{i+1}, \, :]
    \;
    \tg_{i+2} [:, \, j_{i+2}, \, :]
    \;
    \cdots
    \;
    \tg_{d} [:, \, j_{d}, \, 1],
&
\vect{g}_r &
\in \set{R}^{R_i },
\\
\mg = &
    \;
    \tg_i [:, \, j_{i}, \, :],
\quad
\mg \in 
\set{R}^{R_{i-1} \times R_{i}}, &&
\end{align*}
where $R_{i-1}$ and $R_{i}$ are the corresponding TT-ranks.
Given the explicit form of the TT-decomposition \cref{eq:tt-repr-tns}, we need to achieve, at least approximately, the equality
\begin{equation}\label{eq:als_equality}
\vect{g}_l^T \, \mg \, \vect{g}_r = y,
\end{equation}
by selecting the elements of the matrix $\mg$.
Note that we can rewrite the left-hand side of the equality \cref{eq:als_equality} in a form of the scalar product
\begin{equation}\label{eq:als_compact}
\begin{split}
& \sum_{r=1}^{R_i \cdot R_{i+1}}
    \! \vect{a}[r] \cdot (\vecop G) [r] = y,
\\
& \vect{a}[r] =
    \vect{g}_l [\, \lfloor (r-1) / R_i \rfloor + 1 \,]
    \, \cdot \,
    \vect{g}_r [\, (r-1)  \mod  R_i + 1 \,],
\end{split}
\end{equation}
where $\vecop \mg \in \set R^{R_{i-1}\cdot R_{i}}$ denotes vectorization of the matrix $\mg$, i.e.,
$$
(\vecop \mg) [\, r_2 + (r_1-1) \cdot R_i \,] =
    \mg [\, r_1, \,r_2 \,],
\quad
r_1 = 1, \, 2, \,\ldots, \, R_{i-1},
\;\;
r_2 = 1, \, 2, \, \ldots, \, R_{i},
$$
operation ``$\cdot\mod\cdot$'' denotes taking the remainder of a division and $\lfloor \cdot \rfloor$ denotes rounding down to the nearest integer.

\begin{table}[t!]
\caption{
    Benchmark functions for performance analysis of the proposed algorithm.}
\label{tbl:benchmarks}
\centering
\begin{small}
\renewcommand{\arraystretch}{2.5}
\begin{tabular}{|p{2.0cm}|p{0.65cm}|p{1.4cm}|p{7.2cm}|}\hline

Function & Ref. & Bounds & Analytical formula
\\ \hline

\func{Ackley}
    & \cite{jamil2013literature}
    & [$-32.768$, $32.768$]
    &
    $
    \ff(\vx)
    =
    - A e^{-B \sqrt{
        \frac{1}{d} \sum_{i=1}^d x_i^2
    }}
    - e^{
        \frac{1}{d} \sum_{i=1}^d \cos{(C x_i)}
    }
    + A
    + e^{1},
    $
    where $A = 20$, $B = 0.2$ and $C = 2 \pi$
    \\ \hline

\func{Alpine}
    & \cite{jamil2013literature}
    & [$-10$, $10$]
    &
    $
    \ff(\vx)
    =
    \sum_{i=1}^{d}
        | x_i \sin{x_i} + 0.1 x_i |
    $
    \\ \hline

\func{Dixon}
    & \cite{jamil2013literature}
    & [$-10$, $10$]
    &
    $
    \ff(\vx)
    =
    (x_1-1)^2 +
    \sum_{i=2}^{d}
        i \cdot \left(
            2 x_i^2 - x_{i-1} \right)^2
    $
    \\ \hline

\func{Exponential}
    & \cite{jamil2013literature}
    & [$-1$, $1$]
    &
    $
    \ff(\vx)
    = - e^{
        - \frac{1}{2}
        \sum_{i=1}^{d} x_i^2
    }
    $
    \\ \hline

\func{Grienwank}
    & \cite{jamil2013literature}
    & [$-600$, $600$]
    &
    $
    \ff(\vx)
    =
    \sum_{i=1}^d \frac{x_i^2}{4000} -
    \prod_{i=1}^d \cos{\left(\frac{x_i}{\sqrt{i}}\right)} + 1
    $
    \\ \hline

\func{Michalewicz}
    & \cite{vanaret2020certified}
    & [$0$, $\pi$]
    &
    $
    \ff(\vx)
    =
    -\sum_{i=1}^d \sin{\left( x_i \right)} \sin^{2m}{\left( \frac{i x_i^2}{\pi} \right)}
    $
    \\ \hline

\func{Piston}
    & \cite{zankin2018gradient}
    & See \cref{tbl:benchmark_piston}
    &
    $
    \ff(M,\, S,\, V_0,\, k,\, P_0,\, T_a,\, T_0)
    =
    2 \pi \sqrt{
        \frac{M}{k + S^2 \frac{P_0 V_0}{T_0} \frac{T_a}{V^2}}
    },
    $
    \newline
    where $
    V = \frac{S}{2k} \left(
        \sqrt{A^2 + 4 k \frac{P_0 V_0}{T_0} T_a} - A
    \right)
    $ and 
    \newline
    $
    A = P_0 S + 19.62 M - \frac{k V_0}{S}
    $
    \\ \hline

\func{Qing}
    & \cite{jamil2013literature}
    & [$0$, $500$]
    &
    $
    \ff(\vx)
    = \sum_{i=1}^d
        \left( x_i^2 - i \right)^2
    $
    \\ \hline

\func{Rastrigin}
    & \cite{dieterich2012empirical}
    & [$-5.12$, $5.12$]
    &
    $
    \ff(\vx)
    =
    A \cdot d + \sum_{i=1}^{d} \left(
        x_i^2 - A \cdot \cos{(2\pi \cdot x_i)}
    \right),
    $
    \newline
    where $A = 10$
    \\ \hline

\func{Rosenbrock}
    & \cite{jamil2013literature}
    & [$-2.048$, $2.048$]
    &
    $
    \ff(\vx)
    =
    \sum_{i=1}^{d-1} \left(
        100 \cdot (x_{i+1} - x_{i}^2)^2 + (1 - x_{i})^2
    \right)
    $
    \\ \hline

\func{Schaffer}
    & \cite{jamil2013literature}
    & [$-100$, $100$]
    &
    $
    \ff(\vx)
    =
    \sum_{i=1}^{d-1} (
        0.5 +
        \frac{
            \sin^2{\left( \sqrt{x_i^2 + x_{i+1}^2} \right)} - 0.5
        }{
            \left( 1 + 0.001 (x_i^2 + x_{i+1}^2)\right)^2
        }
    )
    $
    \\ \hline

\func{Schwefel}
    & \cite{dieterich2012empirical}
    & [$-500$, $500$]
    &
    $
    \ff(\vx)
    =
    418.9829 \cdot d - \sum_{i=1}^d x_i \cdot \sin{(\sqrt{|x_i|})}
    $
    \\ \hline

\end{tabular}
\end{small}
\end{table}

Now we select all points from the training dataset with the value of the $i$-th element of the multi-index equal to the selected value of $j_i$.
Let there be $K$ such points and $K \geq R_{i-1} R_i$.
Thus we have $K$ equations of the form \cref{eq:als_compact} for the unknown~$G$.
We solve this overdetermined system using the method of least squares.
We do so consistently for all values of $j_i = 1,\, 2,\, \ldots,\, N_i$ obtaining all slices of the TT-core $\tg_i$.
And then we repeat this procedure sequentially for all TT-cores $i=1, \, 2,\,\ldots, \, d$, making several such passes from left to right and back.
We summarise the described procedure in \cref{alg:als}.
A stopping condition in the Algorithm may be 
the reaching of a certain number of iterations (sweeps) or a threshold for changing TT-cores components\footnote{
    It is necessary that the training samples contain all possible values of the indices, i.e., $j_i = 1, 2, \ldots, N_i$ for all $i = 1, \, \ldots, \, d$.
    Moreover, we need to have at least $R_{i-1} R_i$ of them for the least-squares matrix to be a full-rank matrix.
    This can be achieved, for example, by using the LHS distribution with a sample of sufficient size.
}.

\begin{table}[t!]
\caption{
    Description of \func{Piston} function parameters. For each parameter, its description, range of values and physical units are provided.}
\label{tbl:benchmark_piston}
\centering
\begin{small}
\renewcommand{\arraystretch}{1.5}
\begin{tabular}{|p{2.05cm}|p{4.2cm}|p{3cm}|p{2cm}|}\hline

\bf Variable & \bf Name & \bf Range & \bf Units
\\ \hline

$M$   & Piston weight           & $[30, 60]$        & $\textit{kg}$
\\ \hline

$S$   & Piston surface area     & $[0.005, 0.020]$  & $\textit{m}^2$
\\ \hline

$V_0$ & Initial gas volume      & $[0.002, 0.010]$  & $\textit{m}^3$
\\ \hline

$k$   & Spring coefficient      & $[1000, 5000]$    & $\textit{N/m}$
\\ \hline

$P_0$ & Atmospheric pressure    & $[90000, 110000]$ & $\textit{N/m}^2$
\\ \hline

$T_a$ & Ambient temperature     & $[290, 296]$      & $\textit{K}$
\\ \hline

$T_0$ & Filling gas temperature & $[340, 360]$      & $\textit{K}$
\\ \hline

\end{tabular}
\end{small}
\end{table}

\section{Numerical experiments}
\label{s:exp}

\begin{figure}[t!]
\begin{center}
\includegraphics[scale=0.35]{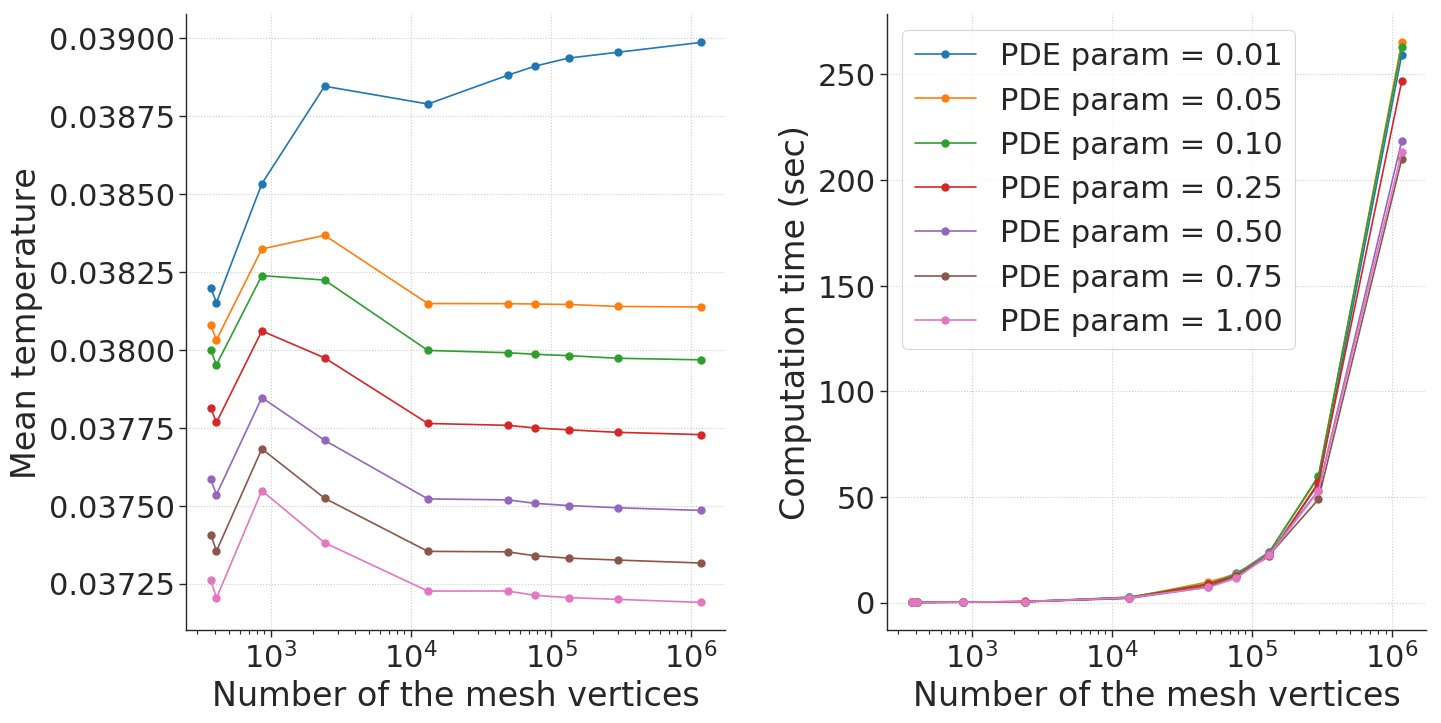}
\caption{
    The \func{PDE-VOI} (on the left plot) and computation time (on the right plot) for different numbers of mesh vertices and different values of the first PDE parameter ($p_1$), while the values of the other parameters ($p_2, \ldots, p_9$) are fixed on the value $0.5$.
}
\label{fig:pde-voi-grid-dep}
\end{center}
\end{figure}

\begin{table}[t!]
\caption{
    Relative error on the train and test data for all benchmarks.
    The reported values for \func{TT-ALS} method (column ``ALS'') are averaged over 10 independent runs.
}
\label{tbl:results}

\begin{center}
\begin{small}
\begin{sc}

\begin{tabular}{|p{2.6cm}|p{1.2cm}|p{2.3cm}|p{2.3cm}|p{2.3cm}|}\hline

          &
          &
ANOVA     & 
ALS       & 
ANOVA-ALS \\ \hline


\multirow{2}{*}{Ackley}
& Train & 1.3e-02 & 5.1e-01 & 2.6e-04 \\ 
& Test  & 1.1e-02 & 7.4e-01 & 2.5e-03 \\ \hline
\multirow{2}{*}{Alpine}
& Train & 2.1e-02 & 8.0e-01 & 1.5e-08 \\ 
& Test  & 2.1e-02 & 1.8e+00 & 1.6e-07 \\ \hline
\multirow{2}{*}{Dixon}
& Train & 4.7e-02 & 6.5e-01 & 3.0e-08 \\ 
& Test  & 4.7e-02 & 2.9e+00 & 1.9e-06 \\ \hline
\multirow{2}{*}{Exponential}
& Train & 1.3e-01 & 6.6e-01 & 5.2e-10 \\ 
& Test  & 1.3e-01 & 2.1e+00 & 1.9e-09 \\ \hline
\multirow{2}{*}{Grienwank}
& Train & 2.0e-02 & 7.1e-01 & 1.1e-04 \\ 
& Test  & 2.0e-02 & 2.0e+00 & 3.9e-04 \\ \hline
\multirow{2}{*}{Michalewicz}
& Train & 3.9e-02 & 6.6e-01 & 2.6e-08 \\ 
& Test  & 4.0e-02 & 5.9e+00 & 2.7e-06 \\ \hline
\multirow{2}{*}{Piston}
& Train & 9.3e-02 & 7.1e-01 & 9.6e-04 \\ 
& Test  & 9.4e-02 & 1.6e+00 & 1.6e-03 \\ \hline
\multirow{2}{*}{Qing}
& Train & 1.5e+01 & 7.4e-01 & 1.2e-08 \\ 
& Test  & 1.4e+01 & 3.4e+00 & 5.7e-05 \\ \hline
\multirow{2}{*}{Rastrigin}
& Train & 7.9e-03 & 8.4e-01 & 2.6e-09 \\ 
& Test  & 8.1e-03 & 1.3e+00 & 2.1e-08 \\ \hline
\multirow{2}{*}{Rosenbrock}
& Train & 2.0e-01 & 7.2e-01 & 1.1e-07 \\ 
& Test  & 2.0e-01 & 4.3e+00 & 2.3e-05 \\ \hline
\multirow{2}{*}{Schaffer}
& Train & 3.9e-02 & 6.8e-01 & 2.4e-04 \\ 
& Test  & 4.0e-02 & 9.9e-01 & 2.7e-04 \\ \hline
\multirow{2}{*}{Schwefel}
& Train & 1.3e-02 & 6.6e-01 & 5.0e-09 \\ 
& Test  & 1.3e-02 & 1.2e+00 & 1.6e-08 \\ \hline
\multirow{2}{*}{PDE-VOI}
& Train & 3.3e-03 & 8.0e-01 & 1.0e-05 \\ 
& Test  & 3.4e-03 & 1.4e+00 & 1.4e-05 \\ \hline


\end{tabular}
\end{sc}
\end{small}
\end{center}
\vskip -0.1in
\end{table}

\begin{table}[t!]
\caption{
    Relative error on the train and test data for all selected benchmarks.
    Multiplicative noise \cref{eq:data_noise} is applied to the training data.
    The reported values for \func{TT-ALS} method (column ``ALS'') are averaged over 10 independent runs.
}
\label{tbl:results_noise}

\begin{center}
\begin{small}
\begin{sc}

\begin{tabular}{|p{2.6cm}|p{1.2cm}|p{2.3cm}|p{2.3cm}|p{2.3cm}|}\hline

          &
          &
ANOVA     & 
ALS       & 
ANOVA-ALS \\ \hline


\multirow{2}{*}{Ackley}
& Train & 1.6e-02 & 6.8e-01 & 8.9e-03 \\ 
& Test  & 1.1e-02 & 9.8e-01 & 7.4e-03 \\ \hline
\multirow{2}{*}{Alpine}
& Train & 2.3e-02 & 8.1e-01 & 8.5e-03 \\ 
& Test  & 2.1e-02 & 1.9e+00 & 1.3e-02 \\ \hline
\multirow{2}{*}{Dixon}
& Train & 4.8e-02 & 7.3e-01 & 8.3e-03 \\ 
& Test  & 4.7e-02 & 3.3e+00 & 1.3e-02 \\ \hline
\multirow{2}{*}{Exponential}
& Train & 1.3e-01 & 7.4e-01 & 7.5e-03 \\ 
& Test  & 1.3e-01 & 2.4e+00 & 2.3e-02 \\ \hline
\multirow{2}{*}{Grienwank}
& Train & 2.2e-02 & 7.1e-01 & 8.2e-03 \\ 
& Test  & 2.0e-02 & 1.8e+00 & 1.5e-02 \\ \hline
\multirow{2}{*}{Michalewicz}
& Train & 4.0e-02 & 6.6e-01 & 7.1e-03 \\ 
& Test  & 3.9e-02 & 6.7e+00 & 3.0e-02 \\ \hline
\multirow{2}{*}{Piston}
& Train & 9.4e-02 & 7.1e-01 & 8.8e-03 \\ 
& Test  & 9.4e-02 & 1.6e+00 & 1.2e-02 \\ \hline
\multirow{2}{*}{Qing}
& Train & 2.7e+01 & 7.4e-01 & 7.9e-03 \\ 
& Test  & 2.6e+01 & 3.4e+00 & 1.8e-02 \\ \hline
\multirow{2}{*}{Rastrigin}
& Train & 1.3e-02 & 6.7e-01 & 8.5e-03 \\ 
& Test  & 8.1e-03 & 1.1e+00 & 7.7e-03 \\ \hline
\multirow{2}{*}{Rosenbrock}
& Train & 2.0e-01 & 7.2e-01 & 8.0e-03 \\ 
& Test  & 2.0e-01 & 4.2e+00 & 2.8e-02 \\ \hline
\multirow{2}{*}{Schaffer}
& Train & 4.0e-02 & 8.5e-01 & 9.0e-03 \\ 
& Test  & 4.0e-02 & 1.2e+00 & 5.9e-03 \\ \hline
\multirow{2}{*}{Schwefel}
& Train & 1.7e-02 & 5.8e-01 & 8.6e-03 \\ 
& Test  & 1.3e-02 & 1.1e+00 & 8.2e-03 \\ \hline


\end{tabular}
\end{sc}
\end{small}
\end{center}
\vskip -0.1in
\end{table}

To demonstrate the effectiveness of the proposed \method{TT-ANOVA-ALS} approach, we consider twelve model $7$-dimensional analytical functions (benchmarks), which are presented in \cref{tbl:benchmarks}. Note that the list of functions includes the Piston function, which corresponds to the practical problem of modeling the time that takes a piston to complete one cycle within a cylinder; the description of related parameters and their ranges are shown in \cref{tbl:benchmark_piston}.

We also consider the more complicated problem of approximating the mean solution of the parameter-dependent PDE~\cite{ballani2015hierarchical, tobler2012low, kapushev2020tensor}
\begin{equation*}
-\div\bigl(
    \func{k} (\vx, \vect{p})
    \nabla \func{u}(\vx, \vect{p})
\bigr) = \ff(\vx),
\quad
\vx = [x_1, x_2]^T \in \Omega = [0, 1]^2,
\quad
\func{u} \big|_{\partial \Omega} = \func{u}_D,
\end{equation*}
where
\begin{equation*}
\ff(\vx) \equiv 1,
\quad
\func{u}_D \equiv 0,
\quad
\func{k}(\vx, \vect{p}) = \begin{cases}
    p_{\mu},
    &
    \text{if} \; \vx \in S_{\mu}, \; \mu = 1,\,2
    ,\ldots,\,m^2,
    \\
    1,
    &
    \text{otherwise},
\end{cases}
\end{equation*}
$\vect{p} = [p_1, p_2, \ldots, p_{m^2}]$ and $\{ S_\mu \}_{\mu=1}^{m^2}$ is a set of $m^2$ disks of radius $\rho = \frac1{4m+2}$ (note that these disks form $m \times m$ grid) with the centers located in the points~$(x^\mu_1,\,x^\mu_2)$,
\begin{multline*}
x^\mu_1 = i \cdot q + (2i-1) \cdot \rho,
\quad
x^\mu_2 = j \cdot q + (2j-1) \cdot \rho,
\\
q = \frac{1 - 2 m \rho}{m + 1},
\quad
\mu=(i-1)m+j,
\quad
i, j = 1, 2, \ldots, m.
\end{multline*}
As a scalar value of interest we consider the average temperature over domain $\Omega$
\begin{equation*}
\func{PDE-VOI} \, (\vect{p})
=
\int_{\Omega} \func{u}(\vx, \vect{p}) \, d \vx,
\quad
\vect{p} \in [0.01, 1]^{m^2}.
\end{equation*}
We select $m = 3$, hence we have $9$ independent parameters, and in discrete representation, the \func{PDE-VOI} is the $9$-dimensional implicit tensor.

For the numerical solution of the PDE, we use a popular software product FEniCS~\cite{logg2012automated} with efficient implementation of the finite element method.
In \cref{fig:pde-voi-grid-dep} we present the \func{PDE-VOI} and computation time for different numbers of vertices of the spatial two-dimensional mesh. As can be estimated from the plots, for sufficient stability and accuracy of the solution, at least $5 \cdot 10^4$ vertices are required (and we will use this number in our computations), while the time of one calculation is more than $5$ seconds. Thus, for this problem, it seems justified to search for a low-parameter approximation, which makes it possible to carry out calculations an orders of magnitude faster.

For all benchmarks, including \func{PDE-VOI}, we consider the tensor that arises when the corresponding function is discretized on a uniform grid with $10$ nodes in each dimension.
We fixed the TT-rank value at $5$ and the number of ALS sweeps at $50$.
The calculations with a random initial approximation were repeated $10$ times and the averaged result was used.
To train the model, we used $10^4$ tensor elements, which are randomly generated from the LHS distribution.
To check the accuracy of the obtained approximation, we used a set of another $10^4$ random samples and calculated the mean square error on it.

The results of numerical calculations are presented in \cref{tbl:results}.
We report the error on the train and test data for each benchmark and the following approximation methods: the \func{TT-ANOVA} algorithm, i.e., the 1st order ANOVA in the TT-format (column ``ANOVA''); the \func{TT-ALS} algorithm with random initial approximation\footnote{
    We generate all TT-cores $\tg_k$ ($k = 1, 2, \ldots, d$) from a random normal distribution.
} (column ``ALS''); the \func{TT-ANOVA-ALS} algorithm, i.e., the \func{TT-ALS} algorithm, which uses the result of the \func{TT-ANOVA} as an initial approximation (column ``ANOVA-ALS'').
As can be seen from the results, in the latter case (\func{TT-ANOVA-ALS}), the accuracy improves by at least an order of magnitude concerning the \func{TT-ALS} method.

To test the robustness of the algorithm, we repeated the same calculations on the noisy training data, i.e., we replace each train value $y$ by
\begin{equation}\label{eq:data_noise}
    \hat{y} = \left(
        1 + 10^{-2} z
    \right) \cdot y,
    \quad
    z \sim \mathcal{N}(0,\, 1).
\end{equation}
The corresponding results are reported in \cref{tbl:results_noise}.
As follows, even in the presence of noise, a high approximation accuracy is maintained when using the \func{TT-ANOVA-ALS} approach.

Thus, the proposed \func{TT-ANOVA-ALS} approach significantly improves the accuracy of the \func{TT-ALS} method 
with virtually no increase in the computational complexity\footnote{
    Calculations were conducted on a regular laptop.
    For our model problem, the time for building the \func{TT-ANOVA} approximation turns out to be less than $10$ milliseconds, and the average time for building the \func{TT-ALS} approximation is about $6$ seconds.
}.
Also note that this approach is deterministic (on a fixed training set), which means that it is not prone to random failures of the \func{TT-ALS} when the initial approximation is chosen poorly.

\section{Related work}
\label{s:related}

Today, TT-decomposition is a popular approach for compact approximation of multidimensional arrays and multivariable functions~\cite{cichocki2016tensor, phan2020tensor, sedighin2021image, sedighin2021adaptive}, including applications in the field of data analysis, machine learning, solution of PDEs, etc.
Various practical methods based on the TT-decomposition have been proposed to solve multidimensional~\cite{richter2021solving, chertkov2021solution}, parametric~\cite{dolgov2019hybrid, glau2020low} and multiscale~\cite{oseledets2016robust, chertkov2016robust} PDEs, to optimize multivariable functions~\cite{sozykin2022ttopt, nikitin2022quantum}, etc.
We note that in the works~\cite{richter2021solving, chertkov2021solution, dolgov2019hybrid, glau2020low}, special attention is paid to the choice of the initial approximation for constructing the TT-decomposition for the PDE solution, and as such an approximation, the solution obtained from the previous time step is used.
At the same time, in the works~\cite{oseledets2016robust, chertkov2016robust, sozykin2022ttopt, nikitin2022quantum}, a random initial approximation is used.

Until now only a few works have been directly devoted to the selection of the initial approximation in the TT-format.
We note the recent work~\cite{kapushev2020tensor}, where the authors apply Gaussian process regression to construct a preliminary approximation of the function, using a training dataset.
After that, the initial approximation is generated by the TT-cross method, which is applied to the regression model.
This approach is very promising, but it
requires significant additional numerical computations to run TT-cross and it also has all the limitations of regression methods when applied to multidimensional problems.
We also note that the authors conducted numerical experiments for only one benchmark, which does not allow us to evaluate the general robustness of the approach.

An interesting method was considered in~\cite{ko2020fast} for constructing an initial approximation in the framework of the proposed new completion method.
The authors perform a special interpolation based on the known tensor values, and then build an initial approximation using the classical TT-SVD algorithm.
However, this approach was developed only for the problem of image and video processing, and it cannot be transferred to essentially multidimensional problems due to the need to build a full tensor for the TT-SVD algorithm.
Also, we note that the ANOVA decomposition was already used in combination with TT-approach, see, for example~\cite{zhang2014enabling, ballester2019sobol}.
But in these works, effective methods for construction of the Sobol indices through the calculation of multidimensional integrals by TT-decomposition were considered, and this is a different problem than the one presented in our work.

\section{Conclusions}
\label{s:concl}

In this work, we proposed the \func{TT-ANOVA} representation as an initial approximation for the \func{TT-ALS} approach, which makes it possible to effectively approximate functionally given 
black box.
We have implemented the corresponding method \func{TT-ANOVA-ALS} in software and demonstrated its effectiveness for the list of model multidimensional problems, including the approximation of the solution to the parametric PDE.
For all considered model problems we obtained an increase in accuracy by at least an order of magnitude with the same number of train samples from the black box and with virtually no increase in the computational complexity.
The proposed approach is deterministic (on a fixed training set) and it is not prone to random failures of the \func{TT-ALS} when the initial approximation is chosen poorly.
The \func{TT-ANOVA-ALS} method can be applied to a wide class of practically significant problems, including surrogate modeling and various machine learning applications.

\bibliographystyle{siamplain}
\bibliography{biblio}

\end{document}